\newtheorem{theorem}{Theorem}
\newtheorem{conjecture}[theorem]{Conjecture}
\newtheorem{problem}[theorem]{Problem}
\begin{document}

\title{Coloring tournaments: from local to global \thanks{The first author was supported by a CIMI research fellowship. The third author was partially supported by the ANR Project STINT under Contract ANR-13-BS02-0007.}}

\author{Ararat Harutyunyan$^a$, Tien-Nam Le$^b$,\\ St\'{e}phan Thomass\'{e}$^b$, and Hehui Wu$^c$\\~\\
\small $^a$Institut de Math\'ematiques de Toulouse \\ \small Universit\'e de Toulouse Paul Sabatier
\\ \small 31062 Toulouse Cedex 09, France\\~\\
\small $^b$Laboratoire d'Informatique du Parall\'elisme \\ \small   UMR 5668 ENS Lyon - CNRS - UCBL - INRIA
\\ \small Universit\'e de Lyon, France\\~\\
\small $^c$Shanghai Center for Mathematical Sciences \\ \small Fudan University 
\\ \small 220 Handan Road, Shanghai, China
}

\date{}

\maketitle
  \begin{abstract}
The \emph{chromatic number} of a directed graph $D$ is the minimum number of colors needed to color the vertices of $D$ such that each color class of $D$ induces an acyclic subdigraph. Thus, the chromatic number of a tournament $T$ is the minimum number of transitive subtournaments which cover the vertex set of $T$. We show in this paper that tournaments are significantly simpler than graphs with respect to coloring. Indeed, while undirected graphs can be altogether ``locally simple" (every neighborhood is a stable set) and have large chromatic number, we show that locally simple tournaments are indeed simple. In particular, there is a function $f$ such that if the out-neighborhood of every vertex in a tournament $T$ has chromatic number at most $c$, then $T$ has chromatic number at most $f(c)$. This answers a question of Berger et al.
  \end{abstract}

\textbf{Keywords: } chromatic number of tournaments, Erd\H{o}s-Hajnal conjecture, digraph coloring

\section{Introduction}

A directed graph is said to be \emph{acyclic} if it does not contain any directed cycles.
Given a loopless digraph $D$, a \emph{$k$-coloring} of $D$ is a coloring of each of the
vertices of $D$ with one of the colors from the set $\{1,...,k\}$ such that
each color class induces an acyclic subdigraph. The \emph{chromatic number} $\vec \chi(D)$ of $D$ is the smallest
number $k$ for which $D$ admits a $k$-coloring. This digraph invariant was introduced by Neumann-Lara \cite{Neu82}, and 
naturally generalizes many results on the graph chromatic number (see, for example, \cite{BFJKM04}, \cite{HM11}
\cite{HM11b}, \cite{HM12}, \cite{KLMR13}). In this paper, we study the chromatic number of a class of tournaments
where the out-neighborhood of every vertex has bounded chromatic number.

A \emph{tournament} is a loopless digraph such that for every pair of 
distinct vertices $u,v$, exactly one of $uv, vu$ is an arc. 
Given a tournament $T$, a subset $X$ of $V(T)$ is \emph{transitive} if the subtournament of 
$T$ induced by $X$ contains no directed cycle.
Thus, $\vec \chi (T)$ is the minimum $k$ such that $V(T)$ can be colored with $k$ colors where
each color class is a transitive set. 
The coloring of tournaments has close relationship with the celebrated Erd\H{o}s--Hajnal conjecture (cf. \cite{APS01, EH89}) and
has been studied in \cite{heroes,CCS14,C14,BCC15,CKLST}.

Given $t\ge 1$, a tournament $T$ is {\it $t$-local} if for every vertex
$v$, the subtournament of $T$ induced by the set of out-neighbors of $v$ has chromatic number at most $t$.
The following conjecture was raised in \cite{heroes} (Conjecture 2.6)
and settled for $t=2$ in \cite{CKLST}.

\begin{conjecture}\label{conj}
There is a function $f$ such that every $t$-local tournament $T$ satisfies $\vec \chi (T)\leq f(t)$.
\end{conjecture}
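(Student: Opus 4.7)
My approach hinges on the elementary observation that for every vertex $v$ of a tournament $T$,
\[
\vec\chi(T) \;\le\; \vec\chi\bigl(T[N^+(v)]\bigr) + \vec\chi\bigl(T[N^-(v)]\bigr),
\]
proved by coloring $T[N^+(v)]$ and $T[N^-(v)]$ with disjoint palettes and inserting $v$ into any color class of $T[N^+(v)]$ — the class stays transitive because $v$ dominates its whole out-neighborhood. When $T$ is $t$-local this gives $\vec\chi(T[N^-(v)]) \ge \vec\chi(T)-t$ for every $v$, and every induced sub-tournament is clearly still $t$-local.

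The first step is to iterate this into a greedy extraction. Starting from $T_0 := T$, pick $v_{i+1} \in T_i$ arbitrarily and set $T_{i+1} := T[T_i \cap N^-(v_{i+1})]$. After $k$ rounds the vertices $v_1,\ldots,v_k$ form a transitive subtournament (each $v_{i+1}$ dominates all the previous $v_j$), the residual $T_k$ lies in $\bigcap_{i \le k} N^-(v_i)$, and $\vec\chi(T_k) \ge \vec\chi(T) - kt$. Consequently, if we start with $\vec\chi(T)$ enormous compared to $t$, we obtain an arbitrarily long transitive tower $v_1 < v_2 < \cdots < v_k$ sitting underneath a still highly chromatic set $T_k$, every vertex of which points to every $v_i$.

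The main obstacle — and the real work — is that this structure by itself does not yet contradict $t$-locality, since a transitive tower has $\vec\chi = 1$; the contradiction must come from how $T_k$ interacts with the tower via the out-neighborhoods of vertices of $T_k$. For each $w \in T_k$ the set $\{v_1,\ldots,v_k\}$ is a prescribed transitive sub-structure of $N^+(w)$, and $\vec\chi(T[N^+(w)]) \le t$. The plan is to run a Ramsey-type refinement in parallel with the greedy extraction: at step $i$, fix a $t$-coloring $\varphi_i$ of $T[N^+(v_i)]$, then, when later selecting vertices from $T_i$, restrict to a sub-tournament of $T_i$ whose interaction with $v_1,\ldots,v_i$ is homogeneous with respect to the $\varphi_j$'s (roughly, the color of $v_j$ in the out-neighborhood coloring of any later chosen vertex depends only on $j$). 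After sufficiently many refinements, one obtains tower plus residual whose mutual interaction is fully determined by a handful of "types".

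In the homogeneous regime, one should be able to embed, inside the out-neighborhood of a carefully chosen vertex, a non-transitive configuration with chromatic number strictly greater than $t$, contradicting $t$-locality. The delicate task is to show that each refinement still leaves a subtournament of large dichromatic number; since each refinement costs a logarithmic factor in $\vec\chi$ or worse, and we need such a refinement to be iterated $\sim t$ times (or nested with an induction on $t$), the resulting bound $f(t)$ should be tower-type in $t$. Carrying out the quantitative Ramsey extraction in a way that preserves both a long tower and a colorful residual simultaneously is, I expect, the technical heart of the argument.
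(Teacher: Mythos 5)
Your opening inequality $\vec\chi(T)\le\vec\chi(T[N^+(v)])+\vec\chi(T[N^-(v)])$ and the resulting greedy extraction are correct: after $k$ steps you do obtain a transitive chain $v_1,\dots,v_k$ together with a residual $T_k\subseteq\bigcap_i N^-(v_i)$ of chromatic number at least $\vec\chi(T)-k(t+1)$ or so. But the argument stops exactly where the difficulty begins. The structure you have produced carries no contradiction in it: the chain $\{v_1,\dots,v_k\}$ is transitive, so its presence inside $N^+(w)$ for $w\in T_k$ costs only one color and is perfectly compatible with $t$-locality; and the out-neighborhood of each $v_i$ meets the chain only in the (transitive) earlier vertices, since $T_k$ lies in the \emph{in}-neighborhood of every $v_i$. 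The proposed ``Ramsey-type refinement into homogeneous types'' is not described concretely, and the decisive claim --- that in the homogeneous regime one can embed a configuration of chromatic number greater than $t$ inside some out-neighborhood --- is precisely the theorem to be proved; no mechanism is offered for manufacturing high chromatic number out of a transitive tower plus unspecified ``types''. You acknowledge this yourself by calling it ``the technical heart of the argument'', so the proposal is an outline of a first reduction, not a proof.

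For comparison, the paper avoids this dead end by routing the argument through the \emph{domination number} rather than through iterated neighborhood splitting. The key lemma (Theorem~\ref{dom-chrom}) states that any tournament with sufficiently large domination number contains a subtournament on a \emph{bounded} number $\ell$ of vertices with chromatic number at least $k$; it is proved by induction on $k$, gluing a set $A$ with $\vec\chi(A)\ge k$, a large monochromatic piece $S$ of a dominating set, and sets $A_S\subseteq N^+(S)$ with $\vec\chi(A_S)\ge k$ so that every $k$-coloring yields a monochromatic triangle. The conjecture then follows quickly: if $\gamma(T)<K$ one colors $T$ by summing $\vec\chi(N^+[v])\le t+1$ over a small dominating set, and if $\gamma(T)\ge K$ the lemma produces an $\ell$-vertex set $A$ with $\vec\chi(A)=t+1$, which must itself be dominating (otherwise $A$ sits inside some out-neighborhood, contradicting $t$-locality), and one sums over $A$ instead. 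Note also that high chromatic number alone, which is all your extraction preserves, provably cannot force a bounded-size high-chromatic subtournament (the paper's high-girth construction), which is strong evidence that a completion of your approach would in any case have to import some substitute for the domination-number idea.
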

The goal of this note is to provide a proof of Conjecture \ref{conj} for all $t$. 

Given a set $S \subset V(T)$, we say
that $S$ is a \emph{dominating set} of $T$ if 
every vertex in $V \setminus S$ has an in-neighbor in $S$. The \emph{dominating number} $\gamma(T)$ of a tournament $T$ is the smallest number $k$ such that $T$ has a dominating set of size $k$.
The main tool to prove Conjecture \ref{conj} is the following theorem, which seems more interesting than our original goal.

\begin{theorem}\label{dom-chrom}
For every integer $k\ge 1$, there exist integers $K$ and $\ell$ such that every tournament $T$ with dominating number at least $K$ contains a subtournament on $\ell$ vertices and chromatic number at least $k$.
\end{theorem}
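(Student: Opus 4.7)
I plan to prove Theorem~\ref{dom-chrom} by induction on $k$. The base case $k=1$ is immediate: take $K = \ell = 1$.

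For the inductive step, the key observation is that for every vertex $v$ of a tournament $T$, one has $\gamma(T) \leq 1 + \gamma(T[N^-(v)])$. Indeed, given any dominating set $D$ of the subtournament induced on $N^-(v)$, the set $D \cup \{v\}$ dominates $T$: the vertex $v$ itself covers $N^+(v) \cup \{v\}$, while $D$ covers $N^-(v)$. Consequently, if $\gamma(T) \geq K$ then every vertex $v$ satisfies $\gamma(T[N^-(v)]) \geq K-1$. Iterating this, I would build a transitive chain $v_1, v_2, \ldots, v_m$ (with $v_i \to v_j$ for $i > j$) together with a nested family of subtournaments $T_i = T[\bigcap_{j \le i} N^-(v_j)]$ of domination number at least $K-i$. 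Choosing $K$ large enough relative to $K_{k-1}$ and $m$, the induction hypothesis applied to $T_m$ yields a subtournament $S \subseteq T_m$ with $|S| \leq \ell_{k-1}$ and $\vec\chi(S) \geq k-1$.

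The main remaining task is to combine $S$ with a bounded selection of the $v_i$'s to force the chromatic number up to $k$. The obstacle is that each $v_i$ is a common out-neighbor of $S$ (since $S \subseteq N^-(v_i)$), hence a \emph{sink} relative to $S$; any sink can be appended to any transitive color class of $S$ without destroying transitivity, so adding the $v_i$'s naively cannot boost $\vec\chi(S)$. To circumvent this I would refine the choice of the $v_i$'s during the iteration: at each step, instead of picking $v_i$ arbitrarily in $T_{i-1}$, I would pick it according to its adjacency pattern with a partial amplifier structure built so far, using pigeonhole over the bounded number of such patterns.

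A natural amplifier to aim for is the cyclic triple substitution $C_3[S,S,S]$, which satisfies $\vec\chi(C_3[S,S,S]) \geq \lceil \tfrac{3}{2}\vec\chi(S)\rceil$. Indeed, in any proper coloring, each transitive color class must entirely avoid at least one of the three copies (else it would contain a directed triangle formed by one vertex from each copy), and pigeonholing colors across the three copies gives the bound. Iterating this boost $O(\log k)$ times pushes $\vec\chi$ from $2$ to $k$ at polynomial cost in size. The crux of the proof, and the main technical difficulty, is the Ramsey-type step of actually embedding $C_3[S,S,S]$ (or some similar amplifier) into $T$ using only the domination hypothesis; this requires producing three mutually cyclically-adjacent copies of $S$, which likely calls for a more involved multi-layer version of the $N^-$-iteration above, carefully tracking the adjacencies between different branches of the iteration rather than collapsing onto a single nested chain.
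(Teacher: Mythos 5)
Your inductive framework is sound as far as it goes: the base case, the inequality $\gamma(T) \le 1 + \gamma(T[N^-(v)])$, and the resulting nested chain are all correct, and you rightly identify why the chain alone cannot work (every $v_i$ is a common sink for $S$, so it can be absorbed into any transitive color class). But the proof stops exactly where the theorem starts. The ``main remaining task'' of combining $S$ with a bounded selection of further vertices so as to raise the chromatic number is the entire content of the theorem, and neither the ``pigeonhole over adjacency patterns'' nor the ``multi-layer version of the $N^-$-iteration'' is actually carried out. As written, this is a plan with the decisive construction missing, not a proof.

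Moreover, the amplifier you aim for is probably too strong a target. Embedding a full cyclic substitution $C_3[S,S,S]$ requires \emph{complete} adjacency in all three directions between three bounded-size high-chromatic sets; this is essentially the difficulty behind Problem~\ref{clique} of the paper, which is stated there as open for $i\ge 4$. The paper's proof sidesteps this by settling for a gain of only $+1$ per induction step and by weakening one of the three adjacencies from ``complete'' to ``dominating.'' Concretely, it takes a subset $W$ of a \emph{minimum} dominating set $D$, finds a $k$-chromatic set $A$ of size $\ell$ inside $V\setminus N^+[W]$ (so all arcs go from $A$ to $W$), and then, for each $S\subseteq W$ of size $K+\ell+1$, uses the minimality of $D$ to show $\gamma(N^+(S))\ge K+\ell$ and hence to extract a $k$-chromatic set $A_S\subseteq N^+(S)\setminus N^+(A)$ (so all arcs go from $A_S$ to $A$, while $S$ merely dominates $A_S$). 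In any $k$-coloring of $A\cup W\cup\bigcup_S A_S$, pigeonhole gives a monochromatic $S\subseteq W$, both $A$ and $A_S$ contain every color, and some $v\in S$ beats the relevant $w\in A_S$, closing a monochromatic triangle. The two ideas your outline lacks are precisely (i) exploiting minimality of the dominating set to find high-domination sets inside the out-neighborhoods $N^+(S)$, and (ii) realizing that mere domination in one of the three directions suffices to force the triangle, so no full $C_3$-substitution is needed.
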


Roughly speaking, Theorem \ref{dom-chrom} asserts that if the dominating number of a tournament is sufficiently large, then it contains a bounded-size subtournament with large chromatic number. One may ask whether high dominating number is enough to force an induced copy of a specific (high chromatic number) subtournament. The following tournaments may be potential candidates.
Let $S_1$ be the tournament with a single vertex. For every $i>1$, let $S_i$ be the tournament (with $2^i-1$ vertices) obtained by blowing up two vertices of an oriented triangle into two copies of $S_{i-1}$. It is easy to check that $\vec \chi(S_i)\ge i$. The following problem is trivial for $i\le 2$ and verified for $i=3$ in \cite{CKLST}, while still open for all $i\ge 4$.

\begin{problem}\label{clique}
For every integer $i\ge 1$, there exist $f(i)$ such that every tournament $T$ with dominating number at least $f(i)$ contains an isomorphic copy of $S_i$.
\end{problem}

On another note, it is natural to ask whether Theorem \ref{dom-chrom} still holds with a weaker hypothesis.
In particular, is it true that for every $k$, if the chromatic number of a tournament is huge, then it contains a bounded-size subtournament with chromatic number at least $k$? Unfortunately, the answer is negative for any $k\ge 3$. It is well-known that for any $\ell$, there is an undirected simple graph $G$ with arbitrarily high chromatic number and girth at least $\ell+1$. We fix an arbitrary enumeration of vertices of $G$ and create a tournament $T$ as follows: If $ij$ with $i<j$ is an edge of $G$ then $ij$ is an arc of $T$; otherwise, $ji$ is an arc of $T$. 
Then $T$ has arbitrarily high chromatic number while every subtournament of $T$ of size $\ell$ has chromatic number at most $2$.
However, a similar question for dominating number is still open.

\begin{problem}\label{dom-dom}
For every integer $k\ge 1$, there exist integers $K$ and $\ell$ such that every tournament $T$ with dominating number at least $K$ contains a subtournament with $\ell$ vertices and dominating number at least $k$.
\end{problem}

\section{Proof of Conjecture \ref{conj}}

For every vertex $v$ in a tournament $T$, we denote by $N^+_T(v)$
the set of out-neighbors of $v$ in $T$. Given a subset $X$ of $V(T)$, 
let $N^+_T(X)$ denote the union of all $N^+_T(v)$, for $v\in X$, and 
denote by $N^+_T[X]:=X\cup N^+_T(X)$.
For every subset $X$ of $V(T)$, let $\vec \chi _T(X) $ denote the chromatic number of 
the subtournament of $T$ induced by $X$. 

Given a tournament $T$ and a subset $X$ of $V(T)$, we
say a set $R\subseteq V(T)$ (not necessary disjoint from $X$) is a dominating set of $X$ in $T$ if every vertex in $X\backslash R$ has an in-neighbor in $R$. The \emph{dominating number} $\gamma_T(X)$ of $X$ in $T$ is the smallest number $k$ such that $X$ has a dominating set of size $k$.
When it is clear in the context, we omit the subscript $T$ in the notation.

Let $T$ be a tournament and $X,Y\subseteq V(T)$. The following inequalities are straightforward:
\begin{equation}\label{1}
\gamma_T (N^+[X])\le|X|,
\end{equation}
and
\begin{equation}\label{2}
\gamma_T(Y) \le \gamma_T(X)+ \gamma_T(Y\backslash X).
\end{equation}

Let us restate Theorem \ref{dom-chrom}.

\begin{theorem}\label{dom-chrom2}
For every integer $k\ge 1$, there exist integers $K$ and $\ell$ such that every tournament $T$ with $\gamma(T)\ge K$ contains a subtournament $A$ on $\ell$ vertices and $\vec\chi(A)\ge k$.
\end{theorem}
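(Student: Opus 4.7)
I plan to prove Theorem \ref{dom-chrom2} by strong induction on $k$. The base case $k=1$ is immediate with $K=\ell=1$.

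The key chromatic-amplification lemma I would use for the inductive step is the following \emph{block triangle} principle: if pairwise disjoint subtournaments $B_1,B_2,B_3\subseteq T$ satisfy $B_1\to B_2$, $B_2\to B_3$, $B_3\to B_1$ (all arcs in the indicated directions) and each $\vec\chi(B_i)\ge k-1$, then $\vec\chi(B_1\cup B_2\cup B_3)\ge\lceil 3(k-1)/2\rceil\ge k$ for $k\ge 2$. The proof is a short double count: no color class in a coloring of $B_1\cup B_2\cup B_3$ can meet all three $B_i$'s (otherwise three representatives form a directed triangle inside the class), so the number of classes meeting $B_i$ is at least $\vec\chi(B_i)$, yet each class meets at most two of the $B_i$'s, forcing the total number $T$ of colors to satisfy $2T\ge\sum_i\vec\chi(B_i)$. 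Thus the task reduces to locating a block triangle of bounded-size $(k-1)$-chromatic subtournaments inside $T$.

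To produce such a configuration, I would iteratively apply the inductive hypothesis. Setting $T_0=T$ with $\gamma(T)\ge K$ large, extract a subtournament $A_1\subseteq T_0$ of size at most $\ell'=\ell(k-1)$ with $\vec\chi(A_1)\ge k-1$; then form $T_1=T_0\setminus N^+[A_1]$, and note that by (\ref{1}) and (\ref{2}), $\gamma(T_1)\ge\gamma(T_0)-\ell'$. Repeating for $m$ steps produces disjoint $(k-1)$-chromatic subtournaments $A_1,\ldots,A_m$ of size at most $\ell'$ and only requires $K\ge K(k-1)+(m-1)\ell'$. I would then classify each pair $(A_i,A_j)$ as \emph{blocky} (all arcs in one direction) or \emph{mixed}, and apply a Ramsey-type pigeonhole on these pair-types, with $m$ taken sufficiently large in terms of $k$ and $\ell'$, to either exhibit three of the $A_i$'s whose pairwise blocky directions form a 3-cycle (a block triangle, done by the amplification lemma) or produce enough mixed pairs from which a block triangle can be recovered by enlarging the $A_i$'s with well-chosen vertices drawn from the residual of the extraction.

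The main obstacle is precisely this combining step. The naive iterative extraction above removes $N^+[A_i]$ at each stage and therefore forces $A_j\to A_i$ (all arcs) for every $j>i$, producing a \emph{transitive} meta-tournament on $\{A_1,\ldots,A_m\}$ which contains no 3-cycle; so the simplest scheme never directly yields a block triangle. Overcoming this will require either a more flexible extraction procedure that avoids forcing a one-sided meta-structure, or a companion lemma showing that suitably rich mixed-arc configurations between extracted subtournaments already witness chromatic number $\ge k$ via a block-triangle-like count. Working out this Ramsey/extraction machinery and tracking the resulting quantitative bounds on $K$ and $\ell$ in terms of $K(k-1)$ and $\ell(k-1)$ is the heart of the inductive step.
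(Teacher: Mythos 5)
Your block-triangle counting lemma is correct ($2t\ge\sum_i\vec\chi(B_i)$ does give $t\ge\lceil 3(k-1)/2\rceil\ge k$ for $k\ge 2$), and the iterated extraction via the inequalities (\ref{1}) and (\ref{2}) is sound. But the proposal has a genuine gap exactly where you place it: you never produce the block triangle, and the obstruction you identify is fatal to the scheme as written. Removing $N^+[A_i]$ at each step forces every complete arc of the meta-structure on $A_1,\dots,A_m$ to point backwards, so the meta-tournament is transitive and contains no $3$-cycle of blocky pairs; Ramsey on pair-types then only ever hands you mixed pairs, and you give no mechanism for upgrading a mixed pair to a complete arc. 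It is also not clear that a genuine block triangle of three bounded-size $(k-1)$-chromatic pieces must exist at all under the hypothesis; the paper never finds one.

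What the paper does instead is build a \emph{weaker} configuration in which one of the three ``sides'' is only a domination relation, and the source of that side is the minimality of a dominating set --- an ingredient absent from your plan. Concretely: take a minimum dominating set $D$, a subset $W\subseteq D$ of size $k(K+\ell+1)$, and a $k$-chromatic $A\subseteq V\setminus N^+[W]$ of size $\ell$ (so $A\to W$ completely). For every $S\subseteq W$ with $|S|=K+\ell+1$, the set $N^+(S)$ must have domination number at least $K+\ell$, since otherwise $(D\setminus S)\cup S'\cup\{x\}$ with $S'$ a small dominating set of $N^+(S)$ and $x\in A$ would beat $D$. This yields a $k$-chromatic $A_S\subseteq N^+(S)\setminus N^+(A)$ with $A_S\to A$ completely, while the third side $S\to A_S$ holds only in the sense that each vertex of $A_S$ has an in-neighbour in $S$. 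The final step is then not your double count but a pigeonhole: in a $k$-colouring some $S$ is monochromatic, $A$ and $A_S$ each carry all $k$ colours, and a monochromatic directed triangle $u\in A$, $v\in S$, $w\in A_S$ appears. To complete your plan you would need precisely this exchange argument (or an equivalent way to manufacture the forward side of the cycle); without it the transitive meta-structure you derive is a dead end.
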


\begin{proof}  We proceed by induction on $k$. 
The claim is trivial for $k=1$. For $k=2$, we can choose $K=2$ and $\ell=3$. Indeed, if a tournament $T$ satisfies 
$\gamma(T)\geq K=2$, then $T$ is not transitive and thus it contains an oriented triangle $A$ of size $\ell=3$
and $\vec \chi (A)\geq k=2$. 

Assuming now that $(K,\ell)$ exists for $k$, we want to 
find $(K',\ell')$ for $k+1$. For this, we set $K':=k(K+\ell+1)+K$, and fix $\ell'$ later. 
Let $T$ be a tournament such that $\gamma(T)\geq K'$.
Let $D$ be a dominating set of $T$ of minimum size.
Consider a subset $W$ of $D$ of size $k(K+\ell +1)$. From (\ref{1}) and (\ref{2}) we have 
$$\gamma (V\setminus N^+[W]) \ge \gamma (T)-\gamma (N^+[W])\ge K'-|W|\ge K,$$
where $V$ is the vertex set of $T$. Thus by induction hypothesis applied to $k$,
one can find a set $A \subseteq V\setminus N^+[W]$ such that $A$ has $\ell$ vertices 
and $\vec \chi (A)\geq k$. Note that by construction, $A\cap W=\emptyset$ and all arcs between $A$ and $W$ are directed
from $A$ to $W$.

Consider now a subset $S$ of $W$ of size $K+\ell +1$. We claim that $\gamma (N^+(S))\geq K+\ell $. 
If not, we can choose a dominating set $S'$ of $N^+(S)$ of size at most $K+\ell -1$. 
Note that $x$ dominates $S$ for any $x\in A$, and so $S'\cup \{x\}$ dominates $N^+[S]$. Hence $(D\setminus S)\cup S'\cup \{x\}$ would be a dominating set of $T$ of size less than $|D|$, which contradicts the minimality of $|D|$.
Therefore $\gamma (N^+(S))\geq K+\ell $. 

Let $N'$ be the set of vertices $N^+(S)\setminus N^+(A)$.  From (\ref{1}) and (\ref{2}) we have 
$$\gamma (N')\geq \gamma (N^+(S))-\gamma(N^+(A))\geq K+\ell -|A|=K.$$
Thus by induction hypothesis applied to $k$, there is a subset $A_S$ of $N'$ such that $|A_S|=\ell$ and $\vec \chi (A_S)\geq k$.  
Note that by construction, $A_S\cap A=\emptyset$ and all arcs between $A_S$ and $A$ are directed
from $A_S$ to $A$.

We now construct our subtournament of $T$ with chromatic number at least $k+1$. 
For this 
we consider the set of vertices 
$A\cup W$ to which we add the collection of $A_S$, 
for all subsets $S\subseteq W$ of size $K+\ell+1$. Call $A'$ 
this new tournament and observe that its number of vertices is at most 

$$\ell':=\ell+k(K+\ell+1)+\ell {k(K+\ell +1) \choose K+\ell+1 }.$$ 
To conclude, it is sufficient to show that 
$\vec \chi (A') \geq k + 1$. Suppose not, and for contradiction, take a $k$-coloring of $A'$. 
Since
$|W|=k(K+\ell+1)$ there is a monochromatic set $S$ in $W$ of size $K+\ell+1$
(say, colored 1). Recall that we have all arcs from $A_S$ to $A$ and all arcs from $A$ to $S$, and 
note that since $\vec \chi(A)\ge k$ and $\vec \chi(A_S)\ge k$, both $A$ and $A_S$ have a vertex of each of the $k$ colors. Hence 
there are $u\in A$ and $w\in A_S$ colored 1. Since $A_S\subseteq N^+(S)$,
there is $v\in S$ such that $vw$ is an arc. We then obtain the monochromatic cycle $uvw$ of color 1, a contradiction. 
Thus, $\vec \chi (A') \geq k+1$, completing the proof.
\end{proof}



We now show that Conjecture \ref{conj} is true. 

\begin{theorem}
There is a function $f$ such that every $t$-local tournament $T$ satisfies $\vec \chi(T)\leq f(t)$.
\end{theorem}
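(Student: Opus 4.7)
My plan combines Theorem \ref{dom-chrom2} with an elementary upper bound of $\vec\chi$ by $\gamma$ that holds in $t$-local tournaments. First I would establish $\vec\chi(T)\le (t+1)\gamma(T)$ for any $t$-local $T$: taking a minimum dominating set $D$ of $T$, every vertex of $V(T)\setminus D$ has an in-neighbor in $D$, so $V(T)=D\cup\bigcup_{v\in D}N^+(v)$. I would color each vertex of $D$ with its own distinct color (using $|D|$ colors); and, for each $v\in D$, use the $t$-local hypothesis to fix a proper $t$-coloring of $N^+(v)$ and apply it to $N^+(v)\setminus D$ from a fresh palette of $t$ colors disjoint from all others. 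Each resulting color class is then either a singleton of $D$ or a transitive subset of some $N^+(v)$, hence acyclic, and the coloring uses $(t+1)\gamma(T)$ colors in total.

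Given this bound, it suffices to bound $\gamma(T)$ by a function of $t$ alone. I would apply Theorem \ref{dom-chrom2} with $k=t+1$, obtaining constants $K=K(t+1)$ and $\ell=\ell(t+1)$, and aim to show $\gamma(T)<K$. If instead $\gamma(T)\ge K$, Theorem \ref{dom-chrom2} supplies a subtournament $A\subseteq T$ with $|A|\le\ell$ and $\vec\chi(A)\ge t+1$. Since $t$-locality is hereditary, $A$ is itself $t$-local---but this alone does not yet contradict anything, because a $t$-local tournament may perfectly well have chromatic number greater than $t$.

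The hard part, and the main obstacle, is turning this non-contradiction into a contradiction with the $t$-local hypothesis on $T$. For this I would unpack the construction in the proof of Theorem \ref{dom-chrom2} rather than invoking it as a black box: the subtournament produced there sits inside a layered tournament $A'$ containing a ``top layer'' $W\subseteq D$ of size $k(K+\ell+1)$ which lies entirely in the $T$-out-neighborhood of every vertex of a ``bottom layer'' $A$ (because $A\subseteq V\setminus N^+[W]$, forcing every arc between $A$ and $W$ to go from $A$ to $W$). Hence $\vec\chi(W)\le \vec\chi(N^+_T(a))\le t$ for every $a\in A$. If $W$ could additionally be arranged so that $\vec\chi(W)\ge t+1$---for instance by applying Theorem \ref{dom-chrom2} a second time inside $D$ to locate a small subtournament of $D$ with chromatic number at least $t+1$ and then padding it out to the required size inside $D$---then $T$ would contain a vertex whose out-neighborhood has chromatic number at least $t+1$, violating $t$-locality. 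Calibrating the outer constants so that $\gamma(D)$ is large enough for this secondary application of Theorem \ref{dom-chrom2} to $D$ is the main technical step; once arranged, we obtain $\gamma(T)<K$ and consequently $\vec\chi(T)\le (t+1)(K-1)=:f(t)$.
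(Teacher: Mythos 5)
Your first step --- the bound $\vec \chi(T)\le (t+1)\gamma(T)$ for a $t$-local tournament $T$ --- is correct and is exactly the elementary estimate the paper uses. The gap is in the second step. You aim to show $\gamma(T)<K$ by contradiction, and the contradiction is supposed to come from a ``secondary application'' of Theorem \ref{dom-chrom2} inside the minimum dominating set $D$ to produce a set $W\subseteq D$ with $\vec\chi(W)\ge t+1$ sitting inside some out-neighborhood. This fails for two reasons. First, Theorem \ref{dom-chrom2} applies to tournaments with large \emph{domination number}, and there is no reason the subtournament induced on $D$ has large domination number: $D$ being a minimum dominating set of $T$ of size at least $K$ says nothing about $\gamma$ of the tournament induced on $D$, which could be transitive and have domination number $1$. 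So you cannot locate a small high-chromatic subtournament inside $D$, and ``calibrating the outer constants'' cannot repair this. Second, the target itself is too strong: $\gamma(T)\ge K$ need not be contradictory for a $t$-local tournament (the constants satisfy $\ell\ge K$, so a $t$-local tournament can have $K\le\gamma(T)\le\ell$), so no amount of work will establish $\gamma(T)<K$ in general.

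The missing idea is much simpler and requires no unpacking of the proof of Theorem \ref{dom-chrom2}: the set $A$ with $|A|=\ell$ and $\vec\chi(A)\ge t+1$ that the theorem produces is \emph{itself} a dominating set of $T$. Indeed, if some $v\in V(T)\setminus A$ had no in-neighbor in $A$, then $A\subseteq N^+(v)$ and hence $t+1\le \vec\chi(A)\le \vec\chi(N^+(v))\le t$, a contradiction with $t$-locality. Therefore in the case $\gamma(T)\ge K$ you still get $\gamma(T)\le \ell$, and your own first bound yields $\vec\chi(T)\le (t+1)\ell$; in the case $\gamma(T)<K$ it yields $\vec\chi(T)<(t+1)K$. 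Taking $f(t):=\max\bigl((t+1)K,(t+1)\ell\bigr)$ completes the proof, which is precisely the paper's argument.
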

\begin{proof}  

Let $(K,\ell)$ satisfy Theorem \ref{dom-chrom2} for $k:=t+1$.
Let $T$ be a $t$-local tournament.
Thus, if $\gamma(T)\geq K$ then $T$ contains a set $A$ of 
$\ell$ vertices and $\vec \chi(A)\geq t+1$. 
If a vertex $v\in V(T)\backslash A$ does not have an in-neighbor in $A$, then $A\subseteq N^+(v)$, and so $t+1\le \vec \chi(A)\le \vec \chi( N^+(v))\le t$, a contradiction.
Hence, $A$ is a dominating set
of $T$. 
Note that $$\vec\chi (N^{+}[v])\le \vec\chi (N^{+}(v))+\vec\chi (\{v\})\le t+1$$
for every $v\in V(T)$. Thus
$$\vec \chi(T) = \vec \chi (N^{+}[A]) \le \sum_{v\in A}\vec \chi (N^{+}[v]) \leq (t+1)|A|=(t+1)\ell.$$

Otherwise, $\gamma(T)< K$. Let $D$ be a dominating set of $T$ with minimum size. Then 
$$\vec \chi(T) = \vec \chi (N^{+}[D]) \le \sum_{v\in D}\vec \chi (N^{+}[v]) \leq (t+1)|D|<(t+1)K.$$
Consequently, $t$-local tournaments have chromatic number at most $f(t):=\max\big((t+1)K,(t+1)\ell\big)$.
\end{proof}
\medskip 

The implication of our result is that we are possibly missing a key-definition 
of what is a ``large" (or ``dense") hypergraph (i.e., a set of subsets). It could 
be that for a suitable definition of ``large" (for which ``large" intersecting
``large" would be ``large"), we would obtain that for any tournament $T$ on vertex set $V$,
the set of out-neighborhoods of vertices of $T$ is ``large", and in addition 
the set of subsets of vertices of 
a $K$-chromatic tournament inducing at least chromatic number $k$ is also ``large".
Hence, if two large sets are intersecting in a non-empty way, one could find 
an out-neighborhood with chromatic number $k$. 

If such a notion would exist, it should decorrelate the two large sets 
(out-neighborhoods and $k$-chromatic), and thus imply 
the following: If $T_1,T_2$ are tournaments on the same set of vertices and 
$\vec \chi(T_1)$ is huge, then there is a vertex $v$ such that $T_1$ induces on 
$N_{T_2}^+(v)$ a subtournament of large chromatic number. A very similar conjecture was proposed by
Alex Scott and Paul Seymour.

\begin{conjecture}\cite{SS}
For every $k$, there exists $K$ such that if $T$ 
and $G$ are respectively a tournament and a graph on the same set of vertices 
with $G$ of chromatic number at least $K$, then there is a vertex $v$ such that $G$ induces on 
$N_{T}^+(v)$ a subgraph of $G$ of chromatic number at least $k$.
\end{conjecture}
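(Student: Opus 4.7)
My plan is to attempt a proof by induction on $k$, following the inductive skeleton that powers Theorem~\ref{dom-chrom2}. The base case $k=1$ is trivial. For the inductive step, suppose the statement is established for $k$ with some constant $K_k$, and fix $K_{k+1}$ to be chosen large at the end. Let $T$ be a tournament and $G$ a graph on the same vertex set $V$ with $\chi(G) \ge K_{k+1}$, and suppose for contradiction that $\chi(G[N^+_T(v)]) \le k$ for every $v \in V$.

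The first step is the standard covering bound: for any dominating set $D$ of $T$,
$$\chi(G) \;\le\; |D| + \sum_{v\in D}\chi(G[N^+_T(v)]) \;\le\; |D|\,(k+1),$$
so we may assume that $\gamma(T)$ is very large, say larger than $K_{k+1}/(k+1)$. I would then try to imitate the "peeling" construction used in Theorem~\ref{dom-chrom2}: fix a minimum dominating set $D$ of $T$, and, for carefully chosen subsets $W \subseteq D$, recurse inside $V \setminus N^+[W]$ (whose dominating number remains large by inequalities~(\ref{1}) and~(\ref{2})) to build, step by step, a bounded-size subset $A \subseteq V$ with $\chi(G[A]) \ge K_k$. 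Applying the induction hypothesis to $T[A]$ and $G[A]$ would then produce a vertex $v \in A$ with $\chi(G[N^+_T(v) \cap A]) \ge k$, and a dominance argument analogous to the proof of Theorem~1 — showing any such bounded-size $A$ with high $\chi(G[A])$ must itself be dominating — would close the induction.

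The principal obstacle, explicitly flagged by the authors in the discussion immediately preceding the conjecture, is that the chromatic number of the auxiliary graph $G$ is a priori completely uncorrelated with the tournament structure of $T$. Theorem~\ref{dom-chrom2} produces bounded-size subtournaments $A$ with large \emph{tournament} chromatic number $\vec\chi(A)$, but nothing in its proof forces the graph chromatic number $\chi(G[A])$ to be large as well; indeed, an adversarial choice of $G$ can make $\chi(G[A])$ tiny for any fixed selection rule of $A$. Bridging this gap seems to require carrying out the program the authors hint at: introducing a common notion of "largeness" for subsets of $V$ such that (i) every out-neighborhood $N^+_T(v)$ is large, (ii) every subset $S$ with $\chi(G[S]) \ge k$ is large, and (iii) any two large sets must interact in a strong enough way (for instance, one is contained in the out-neighborhood of a vertex of the other, or the intersection is again large). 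Formulating such a notion and simultaneously verifying (i)--(iii) is where I expect the genuinely new ideas to lie, and is the main obstacle I anticipate.
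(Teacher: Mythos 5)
This statement is an open conjecture of Scott and Seymour; the paper offers no proof of it, only the heuristic discussion you quote. Your proposal is therefore not comparable to a paper proof, and, as you yourself concede in your final paragraph, it is not a proof: the central step is missing. Concretely, the gap is this. Your reduction is sound up to the point where you need a \emph{bounded-size} set $A$ with $\chi(G[A])$ large (indeed, if such an $A$ of size $\ell$ with $\chi(G[A])\ge k+1$ existed, then under the contradiction hypothesis no vertex could have $A$ inside its out-neighborhood, so $A$ would dominate $T$ and $\chi(G)\le \ell(k+1)$ would follow, a contradiction for $K_{k+1}>\ell(k+1)$). But the machinery of Theorem~\ref{dom-chrom2} only produces bounded-size sets $A$ with large \emph{tournament} chromatic number $\vec\chi_T(A)$; its boosting step from $k$ to $k+1$ hinges on the fact that a color class in a tournament coloring is transitive, so the directed triangle $uvw$ built from $A$, $S$ and $A_S$ is monochromatic and yields a contradiction. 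When the colors come from an unrelated graph $G$, a color class is merely a stable set of $G$ with no relation to the arcs of $T$, and this gluing argument has no analogue. Moreover, one cannot hope to find bounded-size high-$\chi(G)$ sets from $\chi(G)$ alone: as the paper itself notes after Theorem~\ref{dom-chrom}, graphs of large girth have huge chromatic number while every bounded-size induced subgraph is a forest, hence $2$-colorable. So any proof must genuinely couple $G$ with $T$, which your outline does not do.

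A second, smaller issue: even granting a bounded-size $A$ with $\chi(G[A])\ge K_k$, applying the induction hypothesis to $(T[A],G[A])$ only yields a vertex $v$ with $\chi(G[N^+_T(v)\cap A])\ge k$, i.e., it re-derives the conclusion for $k$ rather than establishing it for $k+1$. The increment from $k$ to $k+1$ must come from an additional structural argument (the analogue of the monochromatic directed triangle), and this is precisely the ingredient that is absent. Your identification of the decorrelation problem and of the need for a common notion of ``largeness'' satisfying your conditions (i)--(iii) is an accurate diagnosis of why the conjecture is hard, but diagnosing the obstacle is not the same as overcoming it.
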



\begin{thebibliography}{99}

\bibitem{APS01} N. Alon, J. Pach, J. Solymosi. Ramsey-type theorems with forbidden subgraphs, \textit{Combinatorica}, \textbf{21} (2) (2001), 155--170.

\bibitem{BCC15} E. Berger, K. Choromanski, M. Chudnovsky. Forcing large transitive subtournaments.
 \textit{Journal of Combinatorial Theory, Series B}, \textbf{112} (2015), 1--17.

\bibitem{heroes} E. Berger, K. Choromanski, M. Chudnovsky, J. Fox, M. Loebl, A.  Scott, P. Seymour,
and S. Thomass\'e. Tournaments and colouring.
 \textit{Journal of Combinatorial Theory, Series B}, \textbf{103} (2013), 1--20.

\bibitem{BFJKM04} D. Bokal, G. Fijav\v{z}, M. Juvan, P.M. Kayll, and B. Mohar. 
The circular chromatic number of a digraph. \textit{Journal of Graph Theory}, \textbf{46} (2004) 227--240.
 
\bibitem{CCS14} K. Choromanski, M. Chudnovsky, and P. Seymour. Tournaments with near-linear transitive subsets.  \textit{Journal of Combinatorial Theory, Series B}, \textbf{109} (2014), 228--249.

\bibitem{C14} M. Chudnovsky. The Erd\"os-Hajnal Conjecture -- A Survey. \textit{Journal of Graph Theory}, \textbf{75} (2014), 178--190.

\bibitem{CKLST} M. Chudnovsky, R. Kim, C.-H. Liu, P. Seymour, and
S. Thomass\'e. Domination in tournaments. \textit{preprint}.

\bibitem{EH89} P. Erd\H{o}s, A. Hajnal. Ramsey-type theorems, \textit{Discrete Applied Mathematics}, \textbf{25} (1-2) (1989), 37--52.

\bibitem{HM11} A. Harutyunyan, and B. Mohar. Strengthened Brooks Theorem for digraphs of girth three. \textit{Electronic Journal of Combinatorics}, \textbf{18} (2011) \#P195.

\bibitem{HM11b} A. Harutyunyan, and B. Mohar. Gallai's Theorem for List Coloring of Digraphs. \textit{SIAM Journal on Discrete Mathematics}, \textbf{25} (1) (2011) 170--180.

\bibitem{HM12} A. Harutyunyan, and B. Mohar. Two results on the digraph
chromatic number. \textit{Discrete Mathematics} \textbf{312} (10) (2012) 1823--1826. 

\bibitem{KLMR13} P. Keevash, Z. Li, B. Mohar, B. Reed, Digraph girth via chromatic number,
\textit{SIAM Journal on Discrete Mathematics}, \textbf{27} (2) (2013) 693--696.

\bibitem{Neu82} V. Neumann-Lara. The dichromatic number of a digraph. 
\textit{Journal of Combinatorial Theory, Series B}, \textbf{33} (1982) 265--270.


\bibitem{SS} A. Scott, and P. Seymour. \textit{Personal communication}.



\end{thebibliography}
\end{document}